\newtheorem{theorem}{Theorem}[section] 
\newtheorem*{mn_thm_1}{Theorem A}
\newtheorem{lemma}[theorem]{Lemma} 
\newtheorem{proposition}[theorem]{Proposition} 
\newtheorem{definition}[theorem]{Definition} 
\newtheorem{remark}[theorem]{Remark}
\DeclareMathOperator{\gk}{\mathscr{GK}-dim}
\DeclareMathOperator{\End}{End}
\title{A dichotomy for the Gelfand--Kirillov dimensions of simple modules over simple differential rings}
\author{Ashish Gupta\footnote{Corresponding author}\  and Arnab Dey Sarkar\footnote{Graduate student.}}
\date{}
\begin{document}
\maketitle

\begin{abstract}
The Gelfand--Kirillov dimension has gained importance since its introduction as an tool in the study of non-commutative infinite dimensional algebras and their modules. In this paper we show a dichotomy for the Gelfand--Kirillov dimension of simple modules over certain simple rings of differential operators. We thus answer a question of J.~C. McConnell in  Representations of solvable Lie algebras V. On the Gelʹfand-Kirillov dimension of simple modules. J. Algebra 76 (1982), no. 2, 489--493 concerning this question for a class of algebras that arise as simple homomorphic images of solvable lie algebras. We also determine the Gelfand--Kirillov dimension of an induced module. 
\end{abstract}

\section{Introduction}
The usefulness of the Gelfand-Kirllov dimension (GK dimension) as a tool in the investigation of non-commutative algebras is now well established.
This is due to the fact that it is relatively easier to compute and work with. It is an exact dimension function for the finitely generated modules over almost commutative algebras. Moreover, for a finitely generated module over an almost commutative algebra the GK dimension is given by the degree of the Hilbert-Samuel polynomial which has coefficients from $\mathbb Q$. Thus the GK dimension of such a module is a non-negative integer.  

It is an easy consequence of the Hilbert Nullstellensatz that each simple module over an affine commutative algebra has GK dimension equal to zero (e.g., \cite[Lemma 1.17]{MR:2001}). It was once believed that such a stability holds true for the GK dimension of simple modules over the Weyl algebras $A_n(\mathbb K)$ over a field $\mathbb K$ of characteristic zero. But in 1985 certain counterexamples were constructed by J.~T. Stafford and it is now known that $A_n(\mathbb C)$ has a simple module $S_j$ with GK dimension $j$ for each $n \le j \le 2n - 1$  (\cite{Ctno}).     

Our aim in this paper is to show that for certain simple rings a weak stability holds in that the GK dimensions of simple modules always lie in a two element set containing the two extreme values. Note that simple rings are not amenable to any methods that are based on the study of ideals. Thus studying the simple modules of such rings may be helpful in learning about their structure.

For example, it was shown by J.~C. McConnell in \cite{JcM:1982} that for a simple algebra $R_{n + 1}$ that occurs as an image of the universal enveloping algebra of a solvable lie algebra there exist simple modules having GK dimension as low as one and as high as $n$ where $n + 1$ denotes the GK dimension of the algebra $R_{n + 1}$.  McConnell's example is the ring defined as follows.
Let $k$ be an extension field of $\mathbb Q$ such that $\dim_{\mathbb Q}k = \infty$ and let $\lambda_i, i \in \{1,\cdots, n\}$ be linearly independent over $\mathbb Q$. Let $G_n$ be the free abelian group of rank $n$ having the basis $x_1, x_2, \cdots, x_n$ and set \[ K_n = k[x_1^{\pm 1}, x_2^{\pm 1}, \cdots, x_n^{\pm 1}]. \] It is the group algebra $kG_n$ of $G_n$ over $k$.    
Then $R_{n + 1}$ is the Ore extension $K_n[x, \delta]$, where $\delta$ is the derivation of $K_n$ defined by $\delta(x_j) = \lambda_jx_j$. The assumption that the scalars $\lambda_i$ are linearly independent over $\mathbb Q$ means that $R_{n +1}$ is a simple ring~\cite{JcM:1982}. By \cite[Theorem 2.1]{JcM:1982} it has GK dimension $n + 1$.

The question was asked whether there exist simple 
$R_{n + 1}$-modules having GK dimension $\alpha$ where $1 < \alpha < n$.

In this article we answer this question in the negative and show the following.

\begin{mn_thm_1}
Let $K_n$ be the laurent polynomial algebra \[ K_n : = k[x_1^{\pm 1}, \cdots, x_n^{\pm 1}]  \] over a field $k$ and let \[ R_{n + 1} :=  K_n[x, \delta], \]
where $\delta$ is a $k$-derivation of $K_n$ of the form 
\[\delta = \sum_{i = 1}^n g_i\frac{\partial}{\partial x_i}. \]
for $g_i \in kx_i + k$ 
such that $R_{n + 1}$ is a simple ring.
Let $M$ be a simple $R_{n + 1}$-module. Then $\gk(M) \in \{1, n\}$. \end{mn_thm_1}

\begin{remark}
Clearly, J.~C. McConnell's example is obtained as a special case by taking $g_i = \lambda_i x_i$. 
\end{remark}

For a finitely generated module $Q$ over an affine algebra $A$, the following inequalities hold with regard to the GK dimension 
\begin{equation}
\label{eq:bounds_4_gkdim}
0 \le \gk(Q) \le \gk(A). \end{equation}

We refer to \cite[Proposition 5.1]{KL:2000} for a proof of this fact.
In the case of the algebra $R_{n + 1}$ in Theorem A the possibility \[ \gk(M) = \gk(R_{n + 1} ) = n + 1 \] 
is easily ruled out (e.g., \cite[Proposition  2.3]{JcM:1982} for a simple module $M$. In the paper \cite{JcM:1982}, it was also shown that $\gk(M) = 0$ is not possible. In this case $M$ would necessarily be finite dimensional over $k$ implying that \[ \dim_k(\End_k(M )) < \infty. \]  
 $R_{n + 1}$ is a simple domain  and so $M$ is a faithful simple module and so the last inequality says that $R_{n + 1}$ is finite dimensional over $k$ which is clearly false. It follows that $1 \le \gk(M) \le n$.  

Other examples of simple rings where such a dichotomy holds for the GK dimensions of simple modules are known. An example is the skew-Laurent extension 
$S_{n+1}$ over $K_n$ defined as 
\[ S_{n + 1} = K_n[x, \sigma] \]
where $\sigma$ is the automorphism of $K_n$ defined by $\sigma(x_i) = \lambda_ix_i$, where $\lambda_j \in k^{\ast}: = k\setminus \{0\}$ are assumed to generate a subgroup of $k^{\ast}$ of the maximal possible rank $n$. The following theorem was proved in \cite{GA:2013}.  

\begin{theorem}
Let $M$ be a simple $S_{n+1}$-module. Then $\gk(M) \in \{1, n \}$. Moreover for each $j \in \{1, n\}$ there exists a simple $S_{n +1}$-module $M$ 
with \[ \gk(M) = j. \] 
\end{theorem}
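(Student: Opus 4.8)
The plan is to study a simple $S_{n+1}$-module $M$ through its restriction to the commutative affine subalgebra $K_n=k[x_1^{\pm1},\dots,x_n^{\pm1}]$, exploiting that the rank-$n$ hypothesis on the $\lambda_i$ is precisely what makes $K_n$ a $\sigma$-simple ring (no nonzero proper $\sigma$-stable ideal) with $\sigma$ of infinite order --- the same hypothesis that makes $S_{n+1}$ a simple domain. First the outer bounds: since $S_{n+1}$ is simple, $M$ is faithful, so the values $\gk(M)\in\{0,n+1\}$ are excluded exactly as recalled in the introduction for $R_{n+1}$ ($\gk(M)=0$ would make $M$ finite dimensional over $k$, hence $S_{n+1}\hookrightarrow\End_k(M)$ finite dimensional; and $\gk(M)=\gk(S_{n+1})=n+1$ is impossible for a cyclic module presented by a nonzero left ideal). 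Thus $1\le\gk(M)\le n$, and the theorem asserts that no intermediate value occurs.

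Next, a torsion dichotomy over $K_n$. Put $t(M)=\{m\in M:\ann_{K_n}(m)\neq0\}$; since $x$ acts injectively and $\ann_{K_n}(xm)=\sigma(\ann_{K_n}(m))$ with $\sigma$ an automorphism of the domain $K_n$, the subspace $t(M)$ is an $S_{n+1}$-submodule, hence $0$ or $M$. If $t(M)=0$ then $M$ is torsion free over $K_n$, so $K_nm\cong K_n$ for every $0\neq m\in M$; taking $m$ to generate $M$ and using that $\gk$ is computed compatibly over the affine subalgebra $K_n$ gives $\gk(M)\ge\gk_{K_n}(K_nm)=\gk(K_n)=n$, whence $\gk(M)=n$. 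If instead $t(M)=M$, I claim $\gk(M)=1$; here consider $t_0(M)=\{m\in M:\dim_kK_nm<\infty\}$, again an $S_{n+1}$-submodule, hence $0$ or $M$. If $t_0(M)=M$, then for a generator $m$ with $\dim_kK_nm=c<\infty$, rewriting products via $x_i^{\pm1}x^{j}=x^{j}\sigma^{-j}(x_i^{\pm1})$ (a scalar multiple of $x^{j}x_i^{\pm1}$) yields $\dim_kV^{d}m\le(2d+1)c$ for a suitable finite-dimensional generating subspace $V$ of $S_{n+1}$, so $\gk(M)\le1$ and therefore $\gk(M)=1$.

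It remains to exclude the intermediate case $t_0(M)=0\neq t(M)$, that is, the possibility that every nonzero $m$ has $\ann_{K_n}(m)$ nonzero of positive coheight --- this is the crux. Choose an associated prime $\mathfrak p\in\operatorname{Ass}_{K_n}(M)$, say $\mathfrak p=\ann_{K_n}(m_0)$; then $\mathfrak p$ is a nonzero prime with $1\le\dim(K_n/\mathfrak p)\le n-1$, its $\sigma$-orbit is infinite (a finite orbit would make $\bigcap_{j}\sigma^{j}(\mathfrak p)$ a nonzero $\sigma$-stable ideal, contradicting $\sigma$-simplicity), and $M=S_{n+1}m_0$ is a homomorphic image of the induced module $E_{\mathfrak p}:=S_{n+1}\otimes_{K_n}(K_n/\mathfrak p)$. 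The goal is then the key lemma: \emph{every simple homomorphic image of $E_{\mathfrak p}$ is locally finite over $k$}, which contradicts $t_0(M)=0$. I would prove this from the submodule lattice of $E_{\mathfrak p}\cong\bigoplus_{j\in\mathbb Z}\bigl(K_n/\sigma^{j}(\mathfrak p)\bigr)$, on which $x$ acts as the shift: when the ideals $\{\sigma^{j}(\mathfrak p)\}_{j\in\mathbb Z}$ are pairwise comaximal, every $K_n$-submodule of $E_{\mathfrak p}$ is $\bigoplus$-graded, so the $S_{n+1}$-submodules correspond to the ideals $\mathfrak c$ of $D:=K_n/\mathfrak p$, and the simple quotients are exactly the $\bigoplus_{j}D/\mathfrak c$ with $\mathfrak c$ a maximal ideal of $D$ --- which are locally finite over $k$, of GK-dimension $1$. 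The remaining case, where the $\sigma$-orbit of $\mathfrak p$ fails pairwise comaximality (which can happen once $n\ge3$), I would reduce to the comaximal one, either by replacing $\mathfrak p$ inside $M$ by a better-behaved associated prime or by passing to a localization of $K_n$ at which all but one summand of $E_{\mathfrak p}$ vanishes; this reduction --- equivalently, the precise value of $\gk(E_{\mathfrak p})$ for a general induced module, as promised in the abstract --- is the step I expect to be the main obstacle.

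Finally, both values are attained. The module $M=S_{n+1}/S_{n+1}(x-1)\cong K_n$, with $x$ acting as $\sigma$, is simple, because its $S_{n+1}$-submodules are precisely the $\sigma$-stable ideals of $K_n$ and $K_n$ is $\sigma$-simple; and $\gk(M)=\gk(K_n)=n$. On the other hand, choosing a maximal ideal $\mathfrak m\subset K_n$ with free $\sigma$-orbit (possible since $\sigma$ has no periodic closed points, again by the rank-$n$ hypothesis), any simple quotient of the cyclic module $E_{\mathfrak m}=S_{n+1}\otimes_{K_n}(K_n/\mathfrak m)$ is locally finite over $k$ yet faithful, hence of GK-dimension $1$. (When $n=1$ the set $\{1,n\}$ equals $\{1\}$ and only this last example is required.)
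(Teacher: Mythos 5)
The paper does not prove this statement itself --- it quotes it from \cite{GA:2013} --- and both that reference and the proof of Theorem~A here proceed by a different method from yours: pass to a cyclic critical $K_n$-submodule, invoke the Brookes--Groves characterization of GK dimension via torsion over Laurent subalgebras $k[x_{i_1}^{\pm1},\dots,x_{i_d}^{\pm1}]$, and Ore-localize at the nonzero elements of a maximal such subalgebra over which the module is not torsion. Much of your alternative route is sound: the outer bounds $1\le\gk(M)\le n$, the observation that the $K_n$-torsion part $t(M)$ and the locally finite part $t_0(M)$ are $S_{n+1}$-submodules and hence $0$ or $M$, the conclusions $\gk(M)=n$ when $t(M)=0$ and $\gk(M)=1$ when $t_0(M)=M$, and both existence constructions are correct.

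The gap is exactly where you flag it, and it is the whole theorem rather than a loose end: the case $t(M)=M$, $t_0(M)=0$ is the only one in which anything beyond the outer bounds is being asserted. Your key lemma --- every simple quotient of $E_{\mathfrak p}$ is locally finite over $k$ --- is proved only when the translates $\sigma^{j}(\mathfrak p)$ are pairwise comaximal, and that hypothesis fails routinely, already for $n=2$: for $\mathfrak p=(x_1+x_2-1)$ the curves $V(\mathfrak p)$ and $V(\sigma^{j}(\mathfrak p))$ meet inside the torus for every $j$, so $E_{\mathfrak p}$ acquires non-graded ``diagonal'' $K_n$-submodules supported on these intersections and its submodule lattice is no longer governed by ideals of $K_n/\mathfrak p$. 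Neither suggested repair goes through as stated: all associated primes of $M$ may have non-comaximal orbits, and a localization of $S_{n+1}$ killing every summand $K_n/\sigma^{j}(\mathfrak p)$ except one would require a $\sigma$-stable denominator set meeting $\sigma^{j}(\mathfrak p)$ for $j\neq 0$ but not meeting $\mathfrak p$, which cannot exist because $\sigma$-stability makes these conditions equivalent. (Your parenthetical that non-comaximality first occurs at $n\ge3$ is also off by one --- it already occurs at $n=2$, where the open interval $(1,2)$ must still be excluded since integrality of $\gk(M)$ is not known a priori.) As it stands the argument establishes the bounds $1\le\gk(M)\le n$ and the existence of simple modules of GK dimensions $1$ and $n$, but not the dichotomy; closing it seems to require something like the critical-submodule and localization machinery of the paper, which bounds the rank of $M$ over a maximal non-torsion Laurent subalgebra and never appeals to associated primes.
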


Our approach as exposed in this article is applicable to a wider class of rings than considered in this paper and can be easily adapted to more general Ore extensions $K_n[x, \sigma, \delta]$ that are simple rings with suitable conditions on the derivation or automorphism involved. In this paper all modules will be right modules.  

\section{GK dimension, Ore localization and critical modules}
In this section we review some basic facts on the GK dimension of finitely generated modules over affine algebras and also the basic tools that we will be employing in our proofs. 

The GK dimension of an affine $k$-algebra $\mathbb A$ is defined as follows. Let a generating set $\{a_1, a_2, \cdots, a_s \}$ be given for $\mathbb A$. For convenience we may assume that $1 \in \{a_1, \cdots, a_s \}$. There is an increasing sequence of $k$-subspaces of $\mathbb A$ which is given by \[ \mathbb A_0: = k, \ \ \ \ \ \mathbb A_1: = \sum_{j = 1}^n ka_j \ \ \ \ \  \mathrm{and,} \ \ \ \ \ \mathbb A_n : = \mathbb A_1^n. \] This defines the \emph{standard finite dimensional filtration} of $\mathbb A$ with respect to the given choice of generators. The GK dimension of $\mathbb A$  measures the asymptotic growth rate of the corresponding sequence of dimensions (measured over $k$). 
\[ \gk(\mathbb A) := \lim_{n \to \infty} \sup \log_n(\dim_k \mathbb A_n). \] 
It turns out that this definition is independent of the choice of a generating set for $\mathbb A$.
This idea can be extended to a finitely generated $\mathbb A$-module $M$ as follows. Pick a finite dimensional generating subspace $M_0$ of $M$ and set $M_n = M_0\mathbb A_m$. We define  
\begin{equation}\label{defn_GK_dim_modules}
\gk(M) := \lim_{n \to \infty}\sup\log_n\dim_kM_n
\end{equation}
Again the GK dimension of $M$ doesn't depend on the choice of a generating set of $\mathbb A$ or the generating subspace of $M$. We refer the interested reader to \cite{KL:2000} for the proofs of these facts and an excellent treatment of the GK dimension.
The GK dimension is rather well-behaved on the finitely generated modules over affine algebras. 

\begin{proposition}\label{GK_dim_bsc_ppty_1}
Let $R$ be an affine algebra with a finitely generated module $M_R$. If $R'$ is an affine subalgebra of $R$ and $M'_{R}$ is a finitely generated $R'$-submodule of $M$ then 
\[ \gk(M'_{R'}) \le \gk(M_R) \]
\end{proposition}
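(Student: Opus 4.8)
The plan is to compare the two defining filtrations inside the single ambient module $M$. First I would use affineness of both algebras to fix a finite dimensional $k$-subspace $V'$ of $R'$ with $1 \in V'$ that generates $R'$, and enlarge it to a finite dimensional $k$-subspace $V$ of $R$ with $V' \subseteq V$ that generates $R$. Next, using that $M'$ is finitely generated over $R'$ and $M$ is finitely generated over $R$, I would pick a finite dimensional generating subspace $W_0$ of $M'_{R'}$ and a finite dimensional generating subspace $U_0$ of $M_R$ with $W_0 \subseteq U_0$ (simply adjoin a finite generating set of $M$ to $W_0$).

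The key observation is that, because $M'$ is an $R'$-submodule of $M$ and the $R'$-action on $M'$ is the restriction of the $R$-action on $M$, the subspace $W_0 (V')^n$ formed inside $M'$ coincides with the subspace $W_0 (V')^n$ formed inside $M$. Since $1 \in V' \subseteq V$ we have $(V')^n \subseteq V^n$ for every $n$, and since $W_0 \subseteq U_0$ this gives $W_0 (V')^n \subseteq U_0 V^n$ as subspaces of $M$. Hence $\dim_k W_0 (V')^n \le \dim_k U_0 V^n$ for all $n$, and applying $\limsup_n \log_n$ to both sides, together with the definition recalled in \eqref{defn_GK_dim_modules} and the independence of $\gk$ from the chosen generators, yields $\gk(M'_{R'}) \le \gk(M_R)$.

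There is no genuine obstacle here beyond careful bookkeeping with the standard finite dimensional filtrations; the single point that must not be glossed over is that the inclusion $W_0 (V')^n \subseteq U_0 V^n$ is taking place inside one common module $M$, which is precisely what the submodule hypothesis supplies. Without that hypothesis the two filtrations would live in unrelated vector spaces and the dimension comparison would be meaningless.
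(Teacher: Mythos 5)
Your proposal is correct. The paper does not prove this proposition itself but simply cites \cite[Lemma 1.13(ii)]{MR:2001}, and your filtration-comparison argument --- choosing nested generating subspaces $V'\subseteq V$ and $W_0\subseteq U_0$, observing $W_0(V')^n\subseteq U_0V^n$ inside the common ambient module $M$, and applying $\limsup_n\log_n\dim_k(\cdot)$ --- is exactly the standard proof of that cited lemma, so you have in effect supplied the argument the paper outsources.
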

\begin{proof}
See Lemma 1.13(ii) of \cite{MR:2001}.
\end{proof}

\begin{proposition}\label{GK_dim_bsc_ppty_2}
Let $\mathbb A$ be an affine commutative $k$-algebra. 
Let \[ 0 \rightarrow N \rightarrow M \rightarrow L \rightarrow 0 \]
be a sequence of $\mathbb A$-modules. Then \[ \gk(M) = \max\{\gk(N), \gk(M/N) \}. \]  
\end{proposition}
The last proposition is actually true for almost commutative algebras~\cite{KL:2000}.

\begin{proof}
See \cite[Proposition 5.1(ii)]{KL:2000}.
\end{proof}

Recall that a multiplicative subset $S$ of a ring $R$ is called a \emph{right Ore subset} if for any $s \in S$ and $r \in R$ the intersection $sR \cap rS$ is nonempty. A right Ore set in a domain $R$ is always a \emph{right denominator set} and $R$ has a right (Ore) localization with respect to $S$ denoted as $RS^{-1}$. We refer the interested reader to \cite[Chapter 8]{GW:1989} for the proofs of the above statements further details. Let $M$ be a right $R$-module and $X$ a right Ore subset in $R$. The set of all $X$-torsion elements, namely,  
\[ \mathrm{T}_X(M) : =  \{ m \in M \mid mx = 0, x \in X \} \]
is a submodule of $M$ called the $X$-torsion submodule of $M$ and is denoted as $t_X(M)$. 
It arises as the kernel of the natural map $M \rightarrow MS^{-1}$ where $MS^{-1}$ denotes the corresponding localization of $M$ at $S$. 

Our approach to the determination of the GK dimensions of simple modules will involve the use of localization. To this end we note the following fact.
\begin{lemma}
\label{Ore_sbsts_of_R}
In the notation of Theorem A, the subsets of $R_{n + 1}$ of the form $X: = kH \setminus \{ 0 \}$ where $H \le G_n$ are Ore subsets in $R_n$. 
\end{lemma}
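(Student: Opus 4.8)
The plan is to verify the right Ore condition for $X$ inside $R_{n+1}$ directly, using in an essential way that $X$ is contained in the commutative coefficient ring $K_n$.

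Two preliminary observations set the stage. First, $kH$ is the group algebra over $k$ of the free abelian group $H$ of finite rank, hence (after choosing a basis of $H$) a Laurent polynomial ring, and in particular a domain; therefore $X=kH\setminus\{0\}$ is a multiplicatively closed subset consisting of nonzero elements. Second, $R_{n+1}=K_n[x;\delta]$ is a domain, since the leading coefficient of a product of two nonzero elements of $R_{n+1}$ equals the product of their leading coefficients, which is nonzero. Hence every element of $X$ is a regular element of $R_{n+1}$, and by the remarks preceding the lemma it suffices to show that for every $s\in X$ and every $r\in R_{n+1}$ the intersection $sR_{n+1}\cap rX$ is nonempty.

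The crux is an elementary divisibility estimate: for any $t\in K_n$ and any integers $0\le j\le m$ one has $x^{j}t^{m}\in t^{m-j}R_{n+1}$. This is proved by induction on $j$: the case $j=0$ is trivial, and in the inductive step one writes $x^{j+1}t^{m}=x\,(x^{j}t^{m})=x\,t^{m-j}w$ for some $w\in R_{n+1}$, then moves $x$ past $t^{m-j}$ using the defining relation $xc=cx+\delta(c)$ together with $\delta(t^{m-j})=(m-j)t^{m-j-1}\delta(t)\in t^{m-j-1}K_n$, which places the result in $t^{m-j-1}R_{n+1}$. This step uses only the Leibniz rule for $\delta$ and the fact that $R_{n+1}$ is a domain, not the special form of $\delta$ in Theorem~A.

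Granting the estimate, fix $s\in X$ and a nonzero $r\in R_{n+1}$, write $r=\sum_{j=0}^{d}a_{j}x^{j}$ with $a_{j}\in K_n$, and put $s':=s^{d+1}\in X$. For each $j\le d$ the estimate (with $t=s$ and $m=d+1$) gives $x^{j}s^{d+1}=s\,r_{j}$ for some $r_{j}\in R_{n+1}$; since $a_{j}$ and $s$ commute in $K_n$, this yields $a_{j}x^{j}s'=s\,(a_{j}r_{j})\in sR_{n+1}$. Summing over $j$ gives $rs'=sr'$ with $r'=\sum_{j}a_{j}r_{j}\in R_{n+1}$, so $rs'\in sR_{n+1}\cap rX$, as required (the case $r=0$ being trivial). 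I do not expect a serious obstacle; the only point needing care is that one must clear the power $s^{d+1}$ matching the $x$-degree of $r$ rather than $s$ itself, and this power trick is precisely what circumvents the fact that $X$ itself need not be $\delta$-stable (one has $\delta(K_n)\subseteq K_n$, but in general $\delta(kH)\not\subseteq kH$).
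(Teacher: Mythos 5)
Your argument is correct, but it takes a genuinely different route from the paper. The paper disposes of the lemma in one line by citing Exercise~10R of Goodearl--Warfield together with the trivial observation that $X$ is an Ore set in the commutative domain $K_n$; that exercise is precisely the general statement that a right Ore set of the coefficient ring survives passage to a differential operator (Ore) extension. You instead verify the right Ore condition $sR_{n+1}\cap rX\neq\emptyset$ from scratch, and your verification is sound: the key estimate $x^{j}t^{m}\in t^{m-j}R_{n+1}$ follows by the induction you describe, since $xt^{m-j}=t^{m-j}x+(m-j)t^{m-j-1}\delta(t)=t^{m-j-1}\bigl(tx+(m-j)\delta(t)\bigr)$, and the commutativity of $K_n$ lets you pull the coefficient $a_j$ across $s$ so that $rs^{d+1}=sr'$ with $s^{d+1}\in X$ (as $kH$ is a domain, $X$ is multiplicatively closed). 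What your approach buys is a self-contained proof that makes visible exactly where the difficulty lies --- $kH$ need not be $\delta$-stable, so one cannot simply commute $x$ past a single $s$, and the power $s^{d+1}$ matched to the $x$-degree of $r$ is the correct remedy; it also exhibits an explicit common multiple rather than an existence statement. What the citation buys is brevity and generality. One cosmetic remark: the domain property of $R_{n+1}$ is not actually used in your inductive estimate (only the Leibniz rule is); it is used, as you correctly note elsewhere, to guarantee that the elements of $X$ are regular, which is what upgrades the Ore set to a denominator set so that the localization in the main proof exists.
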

\begin{proof}
This follows from \cite[Exercise 10R]{GW:1989} noting that $X$ is trivially an Ore subset in the commutative domain $K_n = kG_n$. 
\end{proof}

In \cite{BG:2000} C.~J.~B.
~Brookes and J.~R.~J.~ Groves gave an interesting characterization of the GK dimension of finitely generated modules over crossed products of a free abelian group over a division ring. Since the ring $K_n$ is of this type and we shall be making use of this property which we state below. 

\begin{proposition}[Brookes and Groves]\label{crit_GK_dim}
Let $N$ be a finitely generated $K_n = k[x_1^{\pm 1}, \cdots, x_n^{\pm 1}]$-module. Then 
$\gk(N)$ equals the maximal integer $t$ so that $N$ is not torsion as $k[x_1^{\pm 1}, \cdots, x_t^{\pm 1}]$-module.     
\end{proposition}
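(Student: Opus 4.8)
The plan is to reduce the assertion to the case $N=K_n/P$ for a prime ideal $P$, and then to identify both sides with the transcendence degree of $\operatorname{Frac}(K_n/P)$ over $k$. (I read the statement up to a permutation of the variables $x_1,\dots,x_n$, so that what is really proved is: $\gk(N)$ is the largest $t$ for which $N$ is not torsion over $k[x_{i_1}^{\pm1},\dots,x_{i_t}^{\pm1}]$ for some $i_1<\cdots<i_t$.) Two reductions set this up. Since $K_n=k[x_1,\dots,x_n,y_1,\dots,y_n]/(x_iy_i-1)$ is a Noetherian commutative affine $k$-algebra, the finitely generated module $N$ carries a finite filtration $0=N_0\subset N_1\subset\cdots\subset N_r=N$ with factors $N_i/N_{i-1}\cong K_n/P_i$, $P_i$ prime; by Proposition~\ref{GK_dim_bsc_ppty_2} and induction on $r$, $\gk(N)=\max_i\gk(K_n/P_i)$. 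Next, for any subgroup $H\le G_n$ the set $kH\setminus\{0\}$ is an Ore subset of $K_n$ (Lemma~\ref{Ore_sbsts_of_R}), and localization at it is exact because $K_n$ is commutative; hence the $kH$-torsion modules are closed under submodules, quotients and extensions, so $N$ fails to be $kH$-torsion iff some factor $K_n/P_i$ does. As the right-hand side of the proposition is then also a maximum over the factors, it suffices to prove the statement for $D:=K_n/P$ with $P$ prime.

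For such a $D$ the decisive (elementary) observation is that, $D$ being a domain, $D$ is $kH$-torsion iff $P\cap kH\neq0$: a nonzero element of $P\cap kH$ is regular in the domain $kH$ and annihilates all of $D$, whereas if $P\cap kH=0$ then $kH$ embeds in $D$ and its nonzero elements are nonzerodivisors there, so $1\in D$ is not $kH$-torsion. Hence one must show $\max\{\rk H : kH\hookrightarrow D\}=\operatorname{trdeg}_k\operatorname{Frac}(D)$. The inequality $\le$ is immediate: an embedding $kH\hookrightarrow D$ with $\rk H=t$ induces $k(z_1,\dots,z_t)\cong\operatorname{Frac}(kH)\hookrightarrow\operatorname{Frac}(D)$. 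For $\ge$, the field $\operatorname{Frac}(D)$ is generated over $k$ by the images $\bar x_1,\dots,\bar x_n$, so I would pick a transcendence basis $\bar x_{i_1},\dots,\bar x_{i_d}$ ($d:=\operatorname{trdeg}_k\operatorname{Frac}(D)$) among them and set $H=\langle x_{i_1},\dots,x_{i_d}\rangle$; then $kH\hookrightarrow D$, because the kernel of $k[z_1^{\pm1},\dots,z_d^{\pm1}]\to D,\ z_j\mapsto\bar x_{i_j}$, if nonzero, would after multiplication by a suitable monomial yield a nonzero polynomial relation among the $\bar x_{i_j}$. Thus $\max\{\rk H : kH\hookrightarrow D\}=d$, and $d=\Kdim D=\gk D$ by the standard coincidence of GK and Krull dimension for affine commutative domains (\cite{KL:2000}, \cite{MR:2001}); feeding this back through the two reductions proves the proposition.

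The only genuinely non-elementary ingredient is that last identification $\gk=\Kdim=\operatorname{trdeg}$ for affine commutative domains, which I would simply quote. The step I expect to need the most care is the reduction to prime quotients: one must know that the prime filtration is compatible \emph{simultaneously} with GK dimension (Proposition~\ref{GK_dim_bsc_ppty_2}) and with the property of being $kH$-torsion (exactness of localization), for only then does the problem localize to the transparent geometry of $D=K_n/P$. I note finally that Brookes and Groves prove their criterion for crossed products of free abelian groups over arbitrary division rings \cite{BG:2000}, a setting in which the part played here by $\operatorname{trdeg}$ is much less immediate; the commutative Laurent-polynomial case needed here is the easy special case sketched above.
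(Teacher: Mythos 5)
Your proof is correct, but it takes a genuinely different route from the paper, which gives no argument of its own: Proposition~\ref{crit_GK_dim} is there deduced entirely by citation from \cite{BG:2000} (Lemma~2.6, together with the observation after their Proposition~4.2 that the dimension defined there coincides with $\gk$), a result Brookes and Groves establish for crossed products of a free abelian group over an arbitrary division ring. You instead give a direct commutative proof: reduce to prime quotients $D=K_n/P$ by a prime filtration, using Proposition~\ref{GK_dim_bsc_ppty_2} on the $\gk$ side and exactness of localization on the torsion side (both reductions are sound, and the simultaneous compatibility of the filtration with the two invariants, which you flag, is indeed the one point that needs saying); then observe that $D$ is $kH$-torsion precisely when $P\cap kH\neq 0$, and match the maximal rank of a coordinate subgroup $H$ with $kH\hookrightarrow D$ against $\operatorname{trdeg}_k\operatorname{Frac}(D)$ by choosing a transcendence basis among the $\bar{x}_i$; finally quote $\gk(D)=\Kdim(D)=\operatorname{trdeg}_k\operatorname{Frac}(D)$ for affine commutative domains. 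Your caveat that the statement must be read up to a permutation of the variables (or over an arbitrary rank-$t$ subgroup $H\le G_n$, which is how \cite{BG:2000} phrase it) is warranted: as literally printed the proposition fails already for $N=K_2/(x_1-1)$, which has $\gk(N)=1$ but is torsion over $k[x_1^{\pm 1}]$. What the citation buys the paper is generality and brevity; what your argument buys is a self-contained proof of exactly the commutative case the paper uses, and it makes visible that the invariant in question is simply the transcendence degree of the support.
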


\begin{proof}
This follows from \cite[Lemma 2.6]{BG:2000} where this number is shown to be equal to the dimension of $N$ in the sense of \cite[Definition 2.2]{BG:2000}. It is also shown in the paragraph following Proposition 4.2 of \cite{BG:2000} that this dimension coincides with the GK dimension.     
\end{proof}

We also recall the definition of a GK-critical module. We shall need this notion for modules over $K_n$. But the definition can be made for an arbitrary affine algebra. 

\begin{definition}
Let $A$ be an affine $k$-algebra and let $M$ be non-zero finitely generated $A$-module. Then $M$ is said to be GK-critical if 
\[ \gk(M/N) < \gk(M) \] holds for each submodule $N < M$ with $N \ne 0$.    
\end{definition}
In particular each simple module is GK-critical. We shall also refer to a GK-critical module as simply critical. The usefulness of working with critical modules owes to the following fact.

\begin{proposition}\label{ctcl_modl_1}
Each nonzero $K_n = kG_n$-module contains a critical submodule.  
\end{proposition}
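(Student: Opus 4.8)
The plan is to produce the critical submodule explicitly, as a cyclic submodule isomorphic to $K_n/\mathfrak{p}$ for a suitably chosen prime ideal $\mathfrak{p}$ of the commutative affine Noetherian domain $K_n$.

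First I would reduce to the case where $M$ is cyclic: any nonzero submodule of $M$ contains a nonzero cyclic submodule, and being critical is an intrinsic property of a module, so it suffices to treat $M = mK_n$ with $m \neq 0$. Since $K_n$ is Noetherian, the family $\{\ann(m') : 0 \neq m' \in M\}$ has a maximal member $\mathfrak{p} = \ann(m_0)$, and the usual argument shows that $\mathfrak{p}$ is prime. Then $N := m_0 K_n \cong K_n/\mathfrak{p}$ is a nonzero submodule of $M$, and everything reduces to checking that $K_n/\mathfrak{p}$ is critical.

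So let $0 \neq N' < N$ and write $N' = J/\mathfrak{p}$ for an ideal $\mathfrak{p} \subsetneq J \subsetneq K_n$, so that $N/N' \cong K_n/J$. For an affine commutative $k$-algebra the GK dimension agrees with the Krull dimension ($\gk = \Kdim$; see \cite{KL:2000}), so it is enough to see that $\Kdim(K_n/J) < \Kdim(K_n/\mathfrak{p})$. Choosing $a \in J \setminus \mathfrak{p}$, the ring $K_n/J$ is a homomorphic image of $(K_n/\mathfrak{p})\big/\bar{a}(K_n/\mathfrak{p})$, where $\bar a$ is a nonzero non-unit of the domain $K_n/\mathfrak{p}$; hence $\Kdim(K_n/J) \le \Kdim\big((K_n/\mathfrak{p})/\bar{a}(K_n/\mathfrak{p})\big) < \Kdim(K_n/\mathfrak{p})$, the strict inequality being the classical fact that the dimension of an affine domain drops on passing modulo a nonzero element (Krull's principal ideal theorem). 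Thus $\gk(N/N') < \gk(N)$ for every nonzero proper submodule $N'$ of $N$, that is, $N$ is critical.

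This argument is largely a matter of assembling standard facts; the only point I would flag as needing a citation rather than an elementary observation is the strict drop of Krull dimension modulo a nonzero element of the domain $K_n/\mathfrak{p}$ (equivalently, the coincidence of GK and Krull dimension together with Krull's principal ideal theorem). One could instead try to argue order-theoretically by choosing a nonzero finitely generated submodule of $M$ of least GK dimension, but minimality of the GK dimension alone does not deliver criticality, so the associated-prime construction seems to be the cleaner route here, and it also makes transparent how the GK dimension behaves in the relevant quotients.
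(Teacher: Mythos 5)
Your proof is correct, but it takes a genuinely different route from the paper: the paper does not argue at all, it simply cites a more general result valid for twisted group algebras (Proposition 4.3 of \cite{GA:2011}), of which the statement here is the untwisted special case. Your argument instead exploits the commutativity of $K_n$ head-on. The reduction to a cyclic submodule with prime annihilator is the standard associated-prime argument and is valid since $K_n$ is Noetherian (note that $M$ need not be finitely generated, but the set of annihilators of nonzero elements still has a maximal, hence prime, member). The verification that $K_n/\mathfrak{p}$ is critical is also sound: your observation that $\bar a$ cannot be a unit of $K_n/\mathfrak{p}$ (else $1 \in J$) is the small point that makes the principal ideal theorem applicable, and the identification $\gk = \Kdim$ for affine commutative algebras together with the strict dimension drop modulo a nonzero non-unit of an affine domain gives exactly $\gk(N/N') < \gk(N)$. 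What your approach buys is an explicit, self-contained critical submodule, namely one isomorphic to $K_n/\mathfrak{p}$ for an associated prime $\mathfrak{p}$, which is more informative than bare existence; what it gives up is generality, since the associated-prime machinery and the catenary dimension theory are genuinely commutative and would not transfer to the crossed products covered by the cited result. Your closing remark is also apt: picking a finitely generated submodule of minimal GK dimension does not by itself yield criticality, so the prime-annihilator construction is indeed the cleaner elementary route in the commutative setting.
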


\begin{proof}
This follows from the more general result which holds for twisted group algebras (c.f. \cite[Proposition 4.3]{GA:2011}).    
\end{proof}

\begin{proposition}\label{ctcl_modl_2}
Let $M$ be a critical $K_n$-module. Then each nonzero submodule $N$ of $M$ is also critical.  
\end{proposition}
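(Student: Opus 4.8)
\emph{Proof proposal.} The plan is to reduce everything to the additivity of the GK dimension on short exact sequences over a commutative affine algebra (Proposition~\ref{GK_dim_bsc_ppty_2}), after first observing that the submodules in question are again finitely generated. Since $K_n$ is a Laurent polynomial ring over a field, it is Noetherian, so any submodule $N$ of the finitely generated module $M$ is itself a nonzero finitely generated $K_n$-module, and the notion of being critical applies to it.

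The first step is to show $\gk(N) = \gk(M)$. Apply Proposition~\ref{GK_dim_bsc_ppty_2} to the exact sequence
\[ 0 \longrightarrow N \longrightarrow M \longrightarrow M/N \longrightarrow 0 \]
to get $\gk(M) = \max\{\gk(N), \gk(M/N)\}$. Since $N \ne 0$ and $M$ is critical, $\gk(M/N) < \gk(M)$, which forces $\gk(N) = \gk(M)$.

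The second step handles an arbitrary nonzero submodule $P$ of $N$. Such a $P$ is also a nonzero submodule of $M$, so criticality of $M$ gives $\gk(M/P) < \gk(M) = \gk(N)$. Now apply Proposition~\ref{GK_dim_bsc_ppty_2} to the exact sequence
\[ 0 \longrightarrow N/P \longrightarrow M/P \longrightarrow M/N \longrightarrow 0 \]
obtaining $\gk(N/P) \le \gk(M/P) < \gk(N)$. As $P$ was an arbitrary nonzero submodule of $N$, this shows $N$ is critical.

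There is no serious obstacle here; the only points requiring care are the verification that $N$ is finitely generated (Noetherianity of $K_n$) and the correct bookkeeping of which module is "$M$", "$N$", "$L$" in each application of the additivity formula. Everything else is a direct consequence of Proposition~\ref{GK_dim_bsc_ppty_2} and the definition of a critical module.
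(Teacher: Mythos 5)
Your proof is correct and follows essentially the same route as the paper, which simply asserts that the claim "follows easily noting Proposition~\ref{GK_dim_bsc_ppty_2}"; your write-up is just the careful elaboration of that one-line argument (first deducing $\gk(N)=\gk(M)$, then bounding $\gk(N/P)$ via $\gk(M/P)$).
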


\begin{proof}
This assertion follows easily noting Proposition \ref{GK_dim_bsc_ppty_2}.  
\end{proof}

\begin{lemma}\label{no_inf_free_summd}
Let $M$ be a $K_n$-module and let $s$ be maximal with respect to the property that $M$ is not torsion as $K_0: = k[x_1^{\pm 1}. \cdots, x_s^{\pm 1}]$-module. Then $M$ cannot embed a free $K_0$-module of infinite rank.          
\end{lemma}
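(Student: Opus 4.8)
The plan is to argue by contradiction via a counting/growth argument, using the Brookes--Groves characterisation (Proposition~\ref{crit_GK_dim}) to convert the non-torsion hypothesis into a statement about $K_0$-submodules. Suppose, for contradiction, that $M$ contains a free $K_0$-submodule $F = \bigoplus_{i \in \mathbb{N}} K_0 e_i$ of countably infinite rank. By the maximality of $s$, the module $M$ is torsion as a $k[x_1^{\pm 1}, \cdots, x_{s+1}^{\pm 1}]$-module (if $s < n$; if $s = n$ the argument is similar working with the ambient ring itself, or the statement must be read with the convention that there is nothing larger). The key point I want to exploit is that $F$, being a submodule of $M$, is also torsion over $K_1 := k[x_1^{\pm 1}, \cdots, x_{s+1}^{\pm 1}]$, yet $F$ is visibly \emph{free}, hence torsion-free, over the subring $K_0 \subset K_1$.

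Here is the mechanism. For each basis element $e_i$, torsion over $K_1$ means there is a nonzero $a_i \in K_1$ with $e_i a_i = 0$. Write $K_1 = K_0[x_{s+1}^{\pm 1}]$ as a Laurent polynomial ring in the single extra variable $x_{s+1}$ over $K_0$. Clearing the $x_{s+1}$-denominator, we may take $a_i \in K_0[x_{s+1}]$, a genuine polynomial in $x_{s+1}$ of some degree $d_i \geq 0$ with coefficients in $K_0$; and since $e_i$ is $K_0$-torsion-free, the constant term (indeed every coefficient) cannot annihilate $e_i$, so in fact $d_i \geq 1$ and the bottom coefficient relates $e_i x_{s+1}^{m}$ back to lower powers. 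The relation $e_i a_i = 0$ thus expresses, for each $i$, the action of a suitably high power of $x_{s+1}$ on $e_i$ as a $K_0$-combination of $e_i, e_i x_{s+1}, \ldots, e_i x_{s+1}^{d_i - 1}$. Consequently the $K_1$-submodule $e_i K_1$ is finitely generated over $K_0$ — generated by $\{e_i x_{s+1}^{j} : 0 \le j < d_i\}$ — and in particular $e_i K_1 \cap F$ cannot be all of $F$; more to the point, summing over a finite set $I$ of indices, $\big(\sum_{i \in I} e_i\big) K_1$ lies in a finitely generated $K_0$-module, so its intersection with $\bigoplus_{i \notin I} K_0 e_i$ is trivial. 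The contradiction I am aiming for: pick any single nonzero $e_0 \in F$; then $e_0 K_1$ is a finitely generated $K_0$-module, say generated by $N$ elements, each of which lies in a finite sub-sum $\bigoplus_{i \le r} K_0 e_i$ for $r$ large enough; but $e_0 = e_0 \cdot 1 \in e_0 K_1$, and the action of $x_{s+1}$ need not preserve the finite-rank piece $\bigoplus_{i \le r} K_0 e_i$ unless we chose the basis compatibly — which is exactly the subtlety.

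So the cleaner route, which I would actually write up, avoids choosing a basis of $F$ adapted to $x_{s+1}$ and instead proceeds as follows. Since $M$ is torsion over $K_1$ and finitely generated modules are the relevant category, first reduce to the case $M$ finitely generated: if $M$ embeds a free $K_0$-module of infinite rank then so does some finitely generated submodule of $M$? No — that is false in general, so instead I work directly. Take $F = \bigoplus_{i \in \mathbb N} K_0 e_i \hookrightarrow M$. The cyclic $K_1$-module $e_0 K_1$ is torsion over $K_1$ hence, by the structure of $K_1 = K_0[x_{s+1}^{\pm 1}]$ and the fact that $e_0$ is $K_0$-torsion-free, $e_0 K_1$ is a finitely generated $K_0$-module, say of $K_0$-rank $\le d$. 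Now here is the point: $e_0 K_1 \subseteq M$, and I claim $e_0 K_1 \cap F$ is contained in a free $K_0$-module of rank $\le d$, hence meets only finitely many of the $K_0 e_i$. But $e_0 \in e_0 K_1 \cap F$ and $e_0 \ne 0$, so at least the index $0$ occurs; apply the same to $e_0 + e_1$, to $e_0 + e_1 + e_2$, etc. — wait, these need not lie in a single cyclic module.

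Let me state the final, correct strategy plainly: I will show that any finitely generated $K_1$-submodule $F'$ of $M$ has bounded $K_0$-rank, using that $F'$ is $K_1$-torsion and $K_1 = K_0[x_{s+1}^{\pm1}]$ is a PID-like extension (Laurent over a domain), so $F' \otimes_{K_0} \mathrm{Frac}(K_0)$ is a finitely generated torsion module over the principal ideal domain $\mathrm{Frac}(K_0)[x_{s+1}^{\pm1}]$, hence finite-dimensional over $\mathrm{Frac}(K_0)$. Therefore $F'$ has finite $K_0$-rank. But if $F = \bigoplus_{i\in\mathbb N} K_0 e_i \hookrightarrow M$, then the finitely generated $K_1$-submodule $e_0 K_1 + \cdots + e_m K_1$ contains $K_0 e_0 \oplus \cdots \oplus K_0 e_m$, which has $K_0$-rank $m+1$; letting $m \to \infty$ contradicts the uniform finite bound. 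The main obstacle is precisely this last step — pinning down that a finitely generated torsion module over $K_1$ has finite rank over the subring $K_0$ in one fewer variable — and it is handled by base-changing to the fraction field of $K_0$ and invoking the structure theorem for finitely generated torsion modules over the resulting one-variable (Laurent) principal ideal domain.
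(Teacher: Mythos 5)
The paper offers no argument of its own for this lemma---it simply cites Lemma 2.3 of Brookes--Groves---so the comparison is really between your argument and the statement itself. Your intermediate reduction is fine as far as it goes: a finitely generated $K_1$-torsion module over $K_1 = K_0[x_{s+1}^{\pm 1}]$ becomes, after base change along $K_0 \to \mathrm{Frac}(K_0)$, a finitely generated torsion module over the principal ideal domain $\mathrm{Frac}(K_0)[x_{s+1}^{\pm 1}]$, hence is finite-dimensional over $\mathrm{Frac}(K_0)$ and so has finite $K_0$-rank. The gap is in the final step. You need a \emph{uniform} bound on the $K_0$-ranks of the submodules $F'_m = e_0K_1 + \cdots + e_mK_1$, but what you have proved is only that each $F'_m$ individually has finite rank; that rank is at least $m+1$, nothing in your argument caps it independently of $m$, and so ``letting $m \to \infty$'' produces no contradiction. (The $s = n$ case is also left to a hand-wave, but that is minor by comparison.)

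This gap cannot be closed from the hypotheses as literally stated, because the lemma as written is false. Take $n = 3$ and $M = K_3/(x_2 - 1) \cong k[x_1^{\pm 1}, x_3^{\pm 1}]$: this $M$ is not torsion over $k[x_1^{\pm 1}]$ but is torsion over $k[x_1^{\pm 1}, x_2^{\pm 1}]$ (it is killed by $x_2 - 1$), so the literal reading gives $s = 1$ and $K_0 = k[x_1^{\pm 1}]$; yet $M$ is free of countably infinite rank over $K_0$. Dropping finite generation one gets even more drastic examples such as $\bigoplus_{i \ge 1} K_2/((x_2-1)^i)$. What Brookes and Groves actually prove assumes $M$ is finitely generated and takes the maximality over \emph{all} rank-$s$ Laurent subalgebras $kH$ with $H \le G_n$ (so that $s$ is their dimension of $M$), not merely over the initial segments $k[x_1^{\pm 1}, \dots, x_s^{\pm 1}]$; their proof then proceeds by a Noetherian/critical-series induction on $M$ rather than by the pointwise rank count you attempt. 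So the missing ingredients are the stronger maximality hypothesis (which rules out examples like the one above) and a global finiteness argument to replace the nonexistent uniform bound.
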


\begin{proof}
The assertion in the lemma is a special case of \cite[Lemma 2.3]{BG:2000}. 
\end{proof}

\section{Proof of the main theorem}

\begin{mn_thm_1}
Let $K_n$ be the Laurent polynomial algebra \[ K_n : = k[x_1^{\pm 1}, \cdots, x_n^{\pm 1}]  \] over a field $k$ and let \[ R_{n + 1} :=  K_n[x, \delta], \]
where $\delta$ is a $k$ derivation of $K_n$ of the form 
\[\delta = \sum_{i = 1}^n g_i\frac{\partial}{\partial x_i}. \]
for $g_i \in kx_i + k$. 
Suppose that $M$ is a simple $R_n$-module. Then $\gk(M) \in \{1, n\}$.    
\end{mn_thm_1}

\begin{proof}
Set $K: = K_n$ and $R: = R_n$.
Suppose that a simple $R$-module $M$ has GK dimension $m$ such that $2 \le m \le n - 2$. We show below that a contradiction necessarily results. Using Propositions \ref{ctcl_modl_1} and \ref{ctcl_modl_2}, let $N$ be a cyclic and critical $K$-submodule of $M$. If $NR \cong N \otimes_{K} R$ then by lemma \ref{lem_1} below, \[ \gk(M) = \gk(NR) = \gk(N) + 1. \] 
Note that in this case $N$ is a simple $K$-module since $R$ is a flat $K$-module. Hence $\gk(N) = 0$ as each simple $K$-module is finite dimensional (Hilbert Nullstellensatz) and so by the above equation we get $\gk(M) = 1$. 

We now suppose that $M \not \cong N \otimes_K R$. For $d \ge 1$, pick a subset $\mathcal I : = \{i_1, \cdots, i_d\}$ of $\{1, \cdots, n\}$ that is maximal with respect to the property that $N$ is not $\mathcal S$-torsion, where 
$\mathcal S$ is the set of nonzero elements of the subalgebra \[ K' :=   k[x_{i_1}^{\pm 1}, \cdots, x_{i_d}^{\pm 1}] \]  of $K_n$ generated by the $x_i$ and their inverses.

Without loss of generality we may assume $\mathcal I = \{1, \cdots, d \}$. If $d = n$ then $N$ and so $M$ embeds a copy of $K$.  By a well-known fact \[ \gk(K) = n. \]  
It now follows from Proposition \ref{GK_dim_bsc_ppty_1}
that $\gk(M) \ge n$. But on the other hand $\gk(M) \le \gk(R) - 1$ in view of Proposition 5.1(e) of \cite{KL:2000} and so $\gk(M) \le n$. This means that $\gk(M) = n$. In this case also we are done. 

Hence we may suppose that $d < n$. 
Set $\mathcal S : = K' \setminus \{0\}$  
By Lemma \ref{fin_rnk_lem}, we know $M$ does not embed an infinite direct sum of copies of $K'$.  
As noted in Lemma \ref{Ore_sbsts_of_R} $\mathcal  S$ is a right Ore subset in $R$ and we may localize $R$ at $\mathcal S$. The resulting ring $R\mathcal S^{-1}$ which we will denote by $T$ has the form:

\[ T : = R\mathcal S^{-1} = K[x, \delta]\mathcal S^{-1} = Q_d[x_{d + 1}, \cdots, x_n][x, \delta] \]   
where $Q_d$ denotes the fraction field of $K'$.  Note that the derivation $\delta$ of $K$ restricts to a derivation on $K'$ that extends uniquely to a derivation of $Q_d$ in accordance with the quotient rule for ordinary derivatives.  

As $M$ is not $\mathcal S$-torsion the $T$-module $V:= T\mathcal S^{-1} \ne 0$. We claim that $V$ is finite dimensional as $Q_d$-space. Indeed, if this were not true $M$ would necessarily embed a free $K'$-module of infinite rank. But this is contrary to the assertion of Lemma \ref{fin_rnk_lem}.
\end{proof}

\begin{lemma}[GK dimension of induced modules]
\label{lem_1}
Let $N$ be a cyclic $K_n$ sub-module of an $R_{n + 1}$-module $M$. Then 
\[ \gk(N \otimes_{K_n} R_{n + 1}) = \gk(N) + 1. \]
 \end{lemma}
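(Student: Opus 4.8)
The plan is to set up a filtration on $R = R_{n+1}$ by the degree in $x$ and transport it to the induced module $N \otimes_{K} R$, then compare growth rates. Write $K = K_n$ and observe that, as a left $K$-module, $R = \bigoplus_{j \ge 0} K x^j$, so $R$ is free over $K$ with the basis $\{x^j : j \ge 0\}$; in particular $R$ is faithfully flat over $K$ and $N \otimes_K R = \bigoplus_{j \ge 0} N \otimes x^j$ as a $K$-module. Since $N$ is cyclic over $K$, pick a generator $v$ of $N$; then $v \otimes 1$ generates $N \otimes_K R$ over $R$. Fix a finite-dimensional generating subspace $W$ of $N$ over $k$ containing $v$ and closed under multiplication by the standard generators $x_1^{\pm 1}, \dots, x_n^{\pm 1}$ of $K$ up to the first filtration layer; concretely take $W_0 := k v$ and $W_m := W_0 K_m$ where $K_m$ is the standard filtration of $K$, so that $\dim_k W_m$ grows like $m^{\gk(N)}$ by definition of GK dimension. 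For $N \otimes_K R$ take the generating subspace $U_0 := k(v \otimes 1)$ and the standard filtration of $R$ with generators $x_1^{\pm 1}, \dots, x_n^{\pm 1}, x$.

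The key computation is to show that the $m$-th filtration layer of $N \otimes_K R$ decomposes, up to bounded error, as $\bigoplus_{j=0}^{m} (W_{m} \otimes x^j)$, and hence has $k$-dimension comparable to $\sum_{j=0}^m \dim_k W_m \asymp m \cdot m^{\gk(N)} = m^{\gk(N)+1}$. For the upper bound one checks that a word of length $\le m$ in the generators, applied to $v \otimes 1$, lands in $\sum_{j=0}^{m} N_{m} \otimes x^j$: each factor of $x$ raises the $x$-degree by one and, via $x g = g x + \delta(g)$, also contributes a $\delta(g)$ term of $x$-degree one less; since $g_i \in k x_i + k$, the derivation $\delta$ sends $K_\ell$ into $K_{\ell+1}$ (it raises degree by at most one), so commuting all the $x$'s to the right costs only a bounded inflation of the $K$-degree — linear in $m$, which does not affect the polynomial growth rate. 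For the lower bound, the elements $v \otimes x^j$ for $0 \le j \le m$ together with $W_{\lfloor m/2 \rfloor} \otimes x^j$ are visibly independent (distinct powers of $x$ give a direct sum), giving at least $\lfloor m/2\rfloor \cdot \dim_k W_{\lfloor m/2\rfloor} \succeq m^{\gk(N)+1}$. Taking $\log_m$ and letting $m \to \infty$ yields $\gk(N \otimes_K R) = \gk(N) + 1$.

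The main obstacle I anticipate is controlling the commutation $x g = gx + \delta(g)$ carefully enough to see that the lower-degree "correction" terms cannot accidentally inflate the $K$-degree faster than linearly in $m$; this is where the hypothesis $g_i \in k x_i + k$ is essential, since it guarantees $\delta(K_\ell) \subseteq K_{\ell+1}$, so after moving $r \le m$ copies of $x$ to the right one accumulates at most $K$-degree $O(m) + O(\text{original degree})$. One should also be slightly careful that $\gk(N)$ is well-defined and finite: $N$ is a cyclic module over the affine algebra $K$, so $0 \le \gk(N) \le \gk(K) = n$, and the displayed equality then makes sense. Finally, note the equality is consistent with the general bound $\gk(N \otimes_K R) \le \gk(R) = n+1$ from \eqref{eq:bounds_4_gkdim}, since $\gk(N) \le n$.
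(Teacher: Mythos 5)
Your proposal is correct and follows essentially the same route as the paper's proof: filter $R_{n+1}$ by word length in $x_1^{\pm 1},\dots,x_n^{\pm 1},x$, use the left $K_n$-module decomposition $R_{n+1}=\bigoplus_j K_n x^j$ together with the observation that $g_i\in kx_i+k$ forces $\delta(K_\ell)\subseteq K_{\ell+1}$ (so normalizing words costs only linear degree inflation), and conclude that the $m$-th layer of the induced module has dimension comparable to $\sum_{i\le m}\dim_k N_i$, i.e.\ one degree higher than the Hilbert function of $N$. The only difference is presentational: the paper evaluates the sum exactly via the Hilbert polynomial and Faulhaber's formula, while you sandwich it between upper and lower bounds, which amounts to the same estimate.
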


\begin{proof}
We write $R$ for $R_{n + 1}$ and $K$ for $K_n$. Also let $M: = N \otimes_{K} R$. By (\ref{defn_GK_dim_modules}) , \[\gk(M) := \lim_{n \to \infty}\sup\log_n\dim_kN_0R_n,\] where $N_0$ denotes a finite dimensional generating subspace of $N$ (and so also a finite dimensional generating subspace of $M$). 
Let $\{R_m\}_{m \in \mathbb{N}}$ denote the standard finite dimensional filtration of $R$ with respect to the generating set $\{x_1^{\pm 1}, \cdots, x_n^{\pm 1}, x\}.$ 
We can write \[R_m = (\oplus_{i = 0}^{m - 1}  K_ix^{m - i}) \oplus K_m \oplus (\oplus_{i = 0}^{m - 1}  K_ix^{i - m}) \] where 
$\{K_m\}_{m \in \mathbb{N}}$ denotes the standard finite dimensional filtration of $K$ with respect to the generating set $\{x_1^{\pm 1}.
\cdots, x_n^{\pm 1}\}.$ 
Now,  
\begin{align} \label{dim_N_m}
\dim_kN_0R_m &= \dim_k[(\oplus_{i = 0}^{m - 1}  K_ix^{m - i}) \oplus K_m \oplus (\oplus_{i = 0}^{m - 1}  K_ix^{i - m})]  \\ 
&= \displaystyle\sum_{i = 0}^{m - 1} \dim_kN_i + \dim_kN_m,
\end{align}
where $\{N_i\}_{i \in \mathbb{N}}$ stands for the standard filtration of the $K$-module $N$ with respect to the generating subspace $N_0$ and the filtration $\{K_m\}_{m \in \mathbb{N}}$ of $K$. 
Here we are using our assumption that the derivation $\delta$ has the form given in the theorem. This ensures that any word in $x_1^{\pm 1}, x_2^{\pm 1}, \cdots, x_n^{\pm 1}, x$ of length $k$ may be expressed as a linear combination of words of length at most $k$   
in which the only power is $x$ is at the end.   

It is well known that for finitely generated module $N$ over an affine commutative algebra there exists a polynomial $\mathcal H_N$, namely, the Hilbert polynomial  such that for a standard finite dimensional filtration $(N_m)_{m = 0}^\infty$
\[ \dim_k(N_m) = \mathcal H_N(m) \]
for almost all natural numbers $m$. 
In this case it is easily seen that $\gk(M)$ equals the degree $d$ of 
$\mathcal H_N$. In view of (\ref{dim_N_m}) we have, \begin{equation} \label{dim_R_m _L_case}
\dim_k N_m  = 2\displaystyle\sum_{t = 0}^{m}\mathcal H_N(t) - \mathcal H_N(m). 
\end{equation} 
 The sum in the right side of the last equation  
is a polynomial in $m$ of degree $d+1$ by the well-known Faulhaber's formula. It is now clear that \[ \gk(N \otimes_K R) = d + 1. \] 
\end{proof}
\begin{remark}\label{poly_cs_rmk_1}
The foregoing lemma may easily be adapted to the case when the base ring is a polynomial ring $k[x_1, x_2, \cdots, x_n]$. In this case 
\[R_m =   \oplus_{i = 0}^{m}  K_ix^{i - m} \]
and the sum corresponding to 
equation (\ref{dim_R_m _L_case}) will be \[ \dim_k N_m  = \displaystyle\sum_{t = 0}^{m }\mathcal H_N(t) \]
which gives \[ \gk(N \otimes_K R ) = \gk(N) + 1 \] 
by Faulhaber's theorem. 
\end{remark}

\begin{lemma}
\label{fin_rnk_lem}
Suppose that $M$ is a cyclic $R_n$-module with a cyclic and critical $K_n$-module $N$ such that $M = NR_n$.  
Let $t$ be maximal with respect to the property that $N$ embeds a copy of $K': = k[x_1^{\pm 1}, \cdots, x_t^{\pm 1}]$. 
If $NR_n \not \cong N \otimes_{K_n} R_n$ then $M$ cannot embed a free $K'$-module of infinite rank. 
\end{lemma}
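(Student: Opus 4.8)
The plan is to derive a contradiction from the assumption that $M = NR_n$ embeds a free $K'$-module $\bigoplus_{j\in J} K'$ with $J$ infinite, where $K' = k[x_1^{\pm 1},\dots,x_t^{\pm 1}]$ and $t$ is maximal such that $N$ embeds a copy of $K'$. The starting point is the surjection of $R_n$-modules $N\otimes_{K_n} R_n \twoheadrightarrow NR_n = M$ coming from $n\otimes r \mapsto nr$; by hypothesis this is not an isomorphism, so it has a nonzero kernel $C$. First I would use the explicit filtration of $R_n$ as a left $K_n$-module, $R_n = \bigoplus_{i\ge 0} K_n x^i$ (here only nonnegative powers of $x$ occur because, by the shape of $\delta$, reduction lets one move all $x$'s to the right, exactly as in the proof of Lemma~\ref{lem_1}), to view $N\otimes_{K_n}R_n = \bigoplus_{i\ge 0} N\otimes x^i$ as a $K_n$-module, hence as a $K'$-module. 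Since $N$ is critical as a $K_n$-module and $t$ is the integer of Proposition~\ref{crit_GK_dim}, each summand $N\otimes x^i\cong N$ has $\gk = t$ and, crucially, is \emph{torsion-free of finite rank} as a $K'$-module: by Lemma~\ref{no_inf_free_summd} applied to $N$, it cannot contain an infinite-rank free $K'$-submodule, so its $K'$-rank is finite.

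Next I would exploit criticality to control the kernel $C$. Because $N$ is a critical $K_n$-module, any nonzero $K_n$-submodule quotient of $N$ of smaller GK dimension is "thin"; the key point is that $C$, being nonzero, must meet the filtration levels nontrivially and — after localizing at $\mathcal{S} := K'\setminus\{0\}$ — must become a nonzero $Q_t$-subspace inside $(N\otimes_{K_n} R_n)\mathcal{S}^{-1}$, where $Q_t = \operatorname{Frac}(K')$. Concretely, localizing the short exact sequence $0\to C\to N\otimes_{K_n}R_n \to M\to 0$ at $\mathcal{S}$ (legitimate since $\mathcal S$ is a right Ore set in $R_n$ by Lemma~\ref{Ore_sbsts_of_R}) gives $0\to C\mathcal S^{-1}\to (N\otimes_{K_n}R_n)\mathcal S^{-1}\to M\mathcal S^{-1}\to 0$. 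Here $(N\otimes_{K_n}R_n)\mathcal S^{-1} = \bigoplus_{i\ge 0}(N\mathcal S^{-1})\otimes x^i$, and each $N\mathcal S^{-1}$ is a finite-dimensional $Q_t$-vector space (finite rank of $N$ over $K'$). The claim is that $C\mathcal S^{-1}\ne 0$: indeed $C\ne 0$ and $C$ is a submodule of $N\otimes_{K_n}R_n$, which is $\mathcal S$-torsion-free (each $N\otimes x^i\cong N$ is $\mathcal S$-torsion-free because $N$ is, $t$ being chosen by the criticality/embedding condition), so localization is injective on it and $C\mathcal S^{-1}\ne 0$.

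Now I would count ranks over $K'$ (equivalently, $Q_t$-dimensions after localizing). If $M$ embeds $\bigoplus_{j\in J}K'$ with $J$ infinite, then $M\mathcal S^{-1}$ embeds $\bigoplus_{j\in J}Q_t$, which has infinite $Q_t$-dimension. On the other hand, $M\mathcal S^{-1}$ is a quotient of $(N\otimes_{K_n}R_n)\mathcal S^{-1} = \bigoplus_{i\ge 0}(N\mathcal S^{-1})x^i$ by the nonzero submodule $C\mathcal S^{-1}$. The obstacle — and the main point of the argument — is to show this forces $M\mathcal S^{-1}$ to have \emph{finite} $Q_t$-dimension, contradicting the infinite-dimensional embedding. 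The mechanism is that $C\mathcal S^{-1}$ is a nonzero $x$-stable $Q_t[x;\delta]$-submodule of the free left $Q_t[x;\delta]$-module of finite rank $N\mathcal S^{-1}\otimes_{Q_t}Q_t[x;\delta]$; since $Q_t[x;\delta]$ is a (left and right) principal ideal domain and $N\mathcal S^{-1}$ is finite-dimensional over $Q_t$, a nonzero such submodule has finite \emph{colength} as a $Q_t$-space — the quotient $(N\mathcal S^{-1}\otimes Q_t[x;\delta])/C\mathcal S^{-1}$ is a finitely generated torsion $Q_t[x;\delta]$-module, hence finite-dimensional over $Q_t$ — and $M\mathcal S^{-1}$ is a further quotient of that. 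Hence $\dim_{Q_t}M\mathcal S^{-1}<\infty$, the desired contradiction. I would finish by remarking that the same computation shows $\gk(M)\le t+1$ in the non-induced case, but that is not needed here; the contradiction with the infinite-rank free embedding completes the proof.
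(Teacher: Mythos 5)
Your setup is sound and runs parallel to the paper's own argument: you reduce correctly to showing that $M\mathcal S^{-1}$ is finite dimensional over the fraction field $Q_t$ of $K'$, you correctly extract $\dim_{Q_t}N\mathcal S^{-1}=f<\infty$ from Lemma \ref{no_inf_free_summd}, and you correctly identify $(N\otimes_{K_n}R_{n+1})\mathcal S^{-1}$ as a free right $Q_t[x;\delta]$-module of rank $f$ containing the nonzero kernel $C\mathcal S^{-1}$. The gap is in the final mechanism. It is simply not true that a nonzero submodule of a finite-rank free module over a principal ideal domain has torsion (equivalently, finite-dimensional) quotient: for $f\ge 2$ the submodule $A\oplus 0\subset A^2$, with $A=Q_t[x;\delta]$, has quotient isomorphic to $A$, which is free and infinite dimensional over $Q_t$. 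What your argument actually requires is that $C\mathcal S^{-1}$ have \emph{full} rank $f$, and ``nonzero'' alone does not deliver this.

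Supplying full rank is exactly where the criticality of $N$ has to enter, and it is the step the paper's proof performs. From a nonzero relation $\sum_{i=0}^{r}n_ix^i=0$ in $M$ of minimal degree $r$ with $n_r\ne 0$, one sees that for every $s\ge r$ the successive quotient $\bigl(\sum_{i=0}^{s}Nx^i\bigr)/\bigl(\sum_{i=0}^{s-1}Nx^i\bigr)$ is a \emph{proper} quotient of $N$ under $n\mapsto nx^s$; since $N$ is critical and $\gk(K')=t$, a proper quotient of $N$ cannot embed a copy of $K'$ (Proposition \ref{GK_dim_bsc_ppty_1}) and is therefore $\mathcal S$-torsion. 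Hence these quotients die under localization, which is precisely the statement that in each degree $s\ge r$ the leading coefficients of elements of $C\mathcal S^{-1}$ exhaust all of $N\mathcal S^{-1}$; only then is $C\mathcal S^{-1}$ of full rank, so that $M\mathcal S^{-1}$ is spanned over $Q_t$ by the finitely many subspaces $(N\mathcal S^{-1})x^i$ with $i<r$ and is finite dimensional. As written, your proof invokes criticality only to bound the rank of $N$ itself, not to control the cokernel, so the argument does not close; inserting the proper-quotient/torsion observation above repairs it and essentially reproduces the paper's proof.
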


\begin{proof}
Set $K := K_n$. We try to mimic the proof of \cite{BG:2000}[Lemma 2.3] which is for the case of a crossed product of a free ablelian group over a division ring.

Since the sum $ \displaystyle\sum_{i = {0}}^\infty Nx^i$ is not direct, we can find a $t$ such that for $r > t$ the $K$-module \[ \displaystyle \sum_{i = 0}^r Nx^i \slash \displaystyle\sum_{i = 0}^{r - 1}Nx^i \] is isomorphic to a proper quotient of $N$ via the obvious map $n \mapsto nx^r$. 

The first part of the hypothesis on $N$ means that $\gk(M) = t$. This follows from Proposition \ref{crit_GK_dim}. 
But $N$ is critical and so each of these quotients is $K^\prime$-torsion. Indeed if this were not true then such a quotient would embed a copy of $K'$ and so by (\ref{GK_dim_bsc_ppty_1}) it would have GK-dimension at least $t$ since $\gk(K^\prime) = t$. But this contradicts the definition of a critical module. Also the sum $\displaystyle\sum_{i = 0}^t n_j \Lambda Y^i$ cannot embed a free $\Lambda^\prime$-submodule of infinite rank by Lemma \ref{no_inf_free_summd}. This proves our claim. 



\end{proof}


\end{document}